\documentclass[a4paper,10pt]{article}
\usepackage[hmargin=2.0cm,vmargin=2.0cm]{geometry}
\usepackage[utf8x]{inputenc}

\usepackage{url}
\usepackage{graphicx}
\usepackage{moreverb}
\usepackage[vlined]{algorithm2e}
\usepackage{listings,color}
\usepackage{amsmath, amssymb,amsthm}
\usepackage{xspace}
\usepackage[pdftex, bookmarks=true, bookmarksnumbered=true]{hyperref}

\pdfadjustspacing=1
\parindent = 0.0cm
\parskip = 0.2cm
\clubpenalty = 10000
\widowpenalty = 10000 
\displaywidowpenalty = 10000

\definecolor{_lightblue}{rgb}{0.3,0.3,1}
\definecolor{_gray}{gray}{.5}
\definecolor{_orange}{rgb}{0.97,0.5,0.04}
\definecolor{_gray}{rgb}{0.35,0.35,0.35}
\definecolor{_lightgray}{rgb}{0.93,0.93,0.93}
\definecolor{_blue}{rgb}{0.14, 0.43, 0.69}

\lstdefinelanguage{Sage}[]{Python}
{morekeywords={True,False,sage,with},
sensitive=true}

\lstset{frame=none,
          showtabs=False,
          showspaces=False,
          showstringspaces=False,
          commentstyle={\color{_orange}},
          keywordstyle={\color{_blue}\bfseries},
          stringstyle ={\color{_gray}},
          language = Sage,
          basicstyle=\small\ttfamily
          }

\hypersetup{
 pdfauthor = {Martin Albrecht and John Perry},
 pdftitle = {F45},
 colorlinks = true,
 linkcolor = _blue,
 citecolor = _blue,
 urlcolor = _blue,
}

\newcommand{\Sage}{Sage\xspace}

\newcommand\pring{\ensuremath{\mathcal R}}
\newcommand\sigset{\ensuremath{\mathcal S}}

\newcommand{\field}[1]{\ensuremath{\mathbb{#1}}\xspace}
\newcommand{\F}{\ensuremath{\field{F}}\xspace}

\newcommand{\N}{\field{N}}
\newcommand{\e}{\ensuremath{\mathbf{e}}\xspace}

\newcommand{\ideal}[1]{\langle {#1} \rangle}
\newcommand{\Mac}[1]{\ensuremath{\mathcal{M}^{acaulay}_{#1}}}
\newcommand{\LCM}{\ensuremath{\textsc{LCM}}\xspace}
\newcommand{\LM}{\ensuremath{\textsc{LM}}\xspace}
\newcommand{\LC}{\ensuremath{\textsc{LC}}\xspace}
\newcommand{\LT}{\ensuremath{\textsc{LT}}\xspace}

\newcommand{\sig}{\ensuremath{\textnormal{sig}}}
\newcommand{\idx}{\ensuremath{\textnormal{idx}}}
\newcommand{\poly}{\ensuremath{\textnormal{poly}}}

\newtheorem{theorem}{Theorem}[section]

\newtheorem{lemma}[theorem]{Lemma}
\newtheorem{proposition}[theorem]{Proposition}
\newtheorem{notation}[theorem]{Notation}

\newtheorem{definition}{Definition}[section]

\newenvironment{citeproof}{\vspace{0.3cm}\emph{Proof.}}{\vspace{0.3cm}}

\theoremstyle{definition}

\title{F4/5}
\author{Martin Albrecht \and John Perry}

\begin{document}

\maketitle

\begin{abstract}
We describe an algorithm to compute Gr\"obner bases which combines $F_4$-style reduction with the $F_5$ criteria. Both $F_4$ and $F_5$ originate in the work of Jean-Charles Faug\`ere \cite{F4,F5}, who has successfully computed many Gr\"obner bases that were previously considered intractable. Another description of a similar algorithm already exists in Gwenole Ars' dissertation~\cite{ars:thesis2005}; unfortunately, this is only available in French, and although an implementation exists, it is not made available for study. We not only describe the algorithm, we also direct the reader to a study implementation for the free and open source \Sage computer algebra system \cite{Sage}. We conclude with a short discussion of how the approach described here compares and contrasts with that of Ars' dissertation.
\end{abstract}

\section{Introduction}

This work describes and discusses Jean-Charles Faugère's $F_5$ algorithm. However, instead of presenting $F_5$ in the ``traditional'' fashion as is done in \cite{F5,Stegers2005,Gash2008}, a variant of $F_5$ in $F_4$-``style'' is presented. We refer to this variant as $F_{4/5}$.
The main differences between $F_{4/5}$ and $F_5$ are:
\begin{itemize}
 \item The two outermost loops are swapped (cf.~\cite{faugere:fse2007}), such that Algorithm~\ref{alg:f5} proceeds by degrees first and then by index of generators. $F_5$ proceeds by index of generators first and then by degrees.
 \item The polynomial reduction routines are replaced by linear algebra quite similar to matrix-$F_5$ (cf.~\cite{bardet-faugere-salvy:tech,faugere-ars-2004}).
 \item The lists \emph{Rules$_i$} are kept sorted at all times, which  matches matrix-$F_5$ closer and seems to improve performance slightly.
 \item Polynomial indices are reversed in Algorithm~\ref{alg:f5} compared to \cite{F5}. That is, we compute the Gröbner basis for the ideal $\ideal{f_0}$ first and not for the ideal $\ideal{f_{m-1}}$.
\end{itemize}

A study implementation of Algorithm~\ref{alg:f5} for Sage is available at
\begin{center}
\url{http://bitbucket.org/malb/algebraic_attacks/src/tip/f5_2.py}
\end{center}
and a study implementation of $F_5$ proper and variants is available at
\begin{center}
\url{http://bitbucket.org/malb/algebraic_attacks/src/tip/f5.py}.
\end{center}


\section{Background material}\label{sec:background}

Let $\pring = \F[x_0,\ldots,x_{n-1}]$ be a polynomial ring over the field $\F$. The goal of any $F_5$-class algorithm (including $F_{4/5}$)
is to compute a Gr\"obner basis of $f_0,\ldots,f_{m-1}\in\pring$ with respect to a given monomial ordering.

The distinguishing feature of $F_5$ is that it records part of a representation of each polynomial (or row) in terms of the input. This record is kept in a so-called \emph{signature}.

\begin{definition}[Signature]
Let $P^m$ be the free module over $\pring$ and let $\e_i$ be a canonical unit vector in $P^m$:  $\e_i = (0,\dots,0,1,0,\dots,0)$ where the $1$ is in the $i$-th position. A \textbf{signature} is any product $\sigma = t \cdot \e_i$, where $t$ is a monomial in $x_0,\ldots,x_{n-1}$. We denote by $\sigset$ the set of all signatures.
\end{definition}

We extend the monomial ordering on $\pring$ to $\sigset$.

\begin{definition}
Let $t\e_i$ and $u\e_j$ be signatures, we say that $t\e_i > u\e_j$ if
\begin{itemize}
 \item $i > j$ or
 \item $i = j$ and $t > u$.
\end{itemize}
\end{definition}

To each polynomial we associate a signature; this pair is called a \emph{labelled polynomial}. We are interested only in associating signatures with polynomials in a specific way.

\begin{definition}[Labelled Polynomial]
Let $\sigma\in\sigset$ and $f\in\pring$. We say that $(\sigma,f)$ is a
\textbf{labelled polynomial}. In addition, we say that $(\sigma,f)$ is \textbf{admissible} if there exist $h_0,\ldots,h_{m-1}\in\pring$
such that
\begin{itemize}
\item $f=h_0 f_0 + \cdots + f_{m-1} h_{m-1}$,
\item $h_{i+1} = \cdots = h_{m-1} = 0$, and
\item $\sigma = \LM(h_i) \e_i$.
\end{itemize}
\end{definition}

The following properties of admissible polynomials are trivial.

\begin{proposition}
Let $t,u,v$ be monomials and $f,g\in\pring$.
Assume that $(u\e_i,f)$ and $(v\e_j,g)$ are admissible.
Each of the following holds.
\begin{itemize}
\item[(A)] $(tu\e_i, tf)$ is admissible.
\item[(B)] If $i > j$, then $(u\e_i,f+g)$ is admissible.
\item[(C)] If $i = j$ and $u > v$, then $(u\e_i,f+g)$ is admissible.
\end{itemize}
\end{proposition}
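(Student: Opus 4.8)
The plan is to prove all three parts directly from the definition, by writing down explicit coefficient tuples that witness admissibility. To set up, I would fix admissible representations guaranteed by the hypotheses: $f = h_0 f_0 + \cdots + h_{m-1} f_{m-1}$ with $h_{i+1} = \cdots = h_{m-1} = 0$ and $\LM(h_i) = u$, and likewise $g = h'_0 f_0 + \cdots + h'_{m-1} f_{m-1}$ with $h'_{j+1} = \cdots = h'_{m-1} = 0$ and $\LM(h'_j) = v$. Note in particular that $h_i$ and $h'_j$ are nonzero, since their leading monomials are defined.

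For (A), I would multiply the representation of $f$ through by the monomial $t$ to get $tf = \sum_k (t h_k) f_k$. The vanishing pattern is inherited ($t h_k = 0$ whenever $h_k = 0$), and since $\pring$ is a domain and the monomial order is compatible with multiplication, $\LM(t h_i) = t\,\LM(h_i) = tu$. Hence $(t h_k)_k$ witnesses that $(tu\e_i, tf)$ is admissible.

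For (B) and (C), I would take the componentwise sum $f + g = \sum_k (h_k + h'_k) f_k$ as the candidate witness and check the two required conditions on it. In case (B), $i > j$ forces $h'_k = 0$ for all $k \geq j+1$, hence in particular for $k = i$ and for all $k > i$; together with $h_k = 0$ for $k > i$ this yields $h_k + h'_k = 0$ for $k > i$, and $h_i + h'_i = h_i$, so $\LM(h_i + h'_i) = u$. In case (C), $i = j$ gives $h_k = h'_k = 0$ for all $k > i$ directly, and at index $i$ the leading terms cannot cancel: $u = \LM(h_i)$ strictly exceeds every monomial of $h'_i$ (because $u > v = \LM(h'_i)$) and every non-leading monomial of $h_i$, so the coefficient of $u$ in $h_i + h'_i$ is $\LC(h_i) \neq 0$ and $\LM(h_i + h'_i) = u$.

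The only step needing the slightest care is the non-cancellation of leading coefficients — multiplication by a monomial does not annihilate a nonzero polynomial, and $\LM(p) > \LM(q)$ implies $\LM(p+q) = \LM(p)$ — but these are immediate since $\F$ is a field and the monomial order is total and multiplicative; everything else is bookkeeping about the index ranges on which the $h_k$ vanish. So I do not anticipate a genuine obstacle here, which is presumably why the statement is described as trivial.
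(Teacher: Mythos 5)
Your proof is correct and is the direct verification from the definition that the paper omits, calling the result ``trivial''; you have simply filled in the routine bookkeeping. The one point worth the care you gave it — that in case (C) the condition $u > v$ prevents cancellation at the leading monomial — is exactly the observation that makes the statement true rather than merely plausible.
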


In light of this fact, we can define the product of a monomial and a signature in a natural way. Let $t,u$ be monomials and $\sigma\in\sigset$ such that
$\sigma=u\e_i$ for some $i\in \N$. Then\[
t\cdot\sigma = tu\e_i.
\]

Whenever $F_5$ creates a labelled polynomial, it adds it to the global list $L$. Instead of passing around labelled polynomials, indices of $L$ are passed to subroutines. We thus identify a labelled polynomial $r$ with the natural number $i$ such that $L_i = r$. The algorithm's correctness and behaviour depends crucially on the assumption that all elements of $L$ are admissible. Thus all $F_5$-class algorithms ensure that this is the case at all times.

\begin{notation}
Let $r \in L$ and write $r= (t \cdot \e_i, p)$. We write
\begin{itemize}
 \item $\poly(r) = p$,
 \item $\sig(r) = t \cdot \e_i$, and
 \item $\idx(r) = i$.
\end{itemize}
\end{notation}

\begin{definition}
Let $a,b\in \N$ and suppose that $\sig(a)=u\e_i$ and $\sig(b)=v\e_j$.
Let $t_a = \LM(\poly(a))$, $t_b = \LM(\poly(b))$, and $$\sigma_{a,b} = \LCM(t_a,t_b)/t_a.$$
If $\sigma_{a,b} \sig(a) > \sigma_{b,a} \sig(b)$ then
the \textbf{naturally inferred signature} of the S-polynomial $S$ of $\poly(a)$ and $\poly(b)$ is $\sigma_{a,b}\cdot u\e_i$.
\end{definition}

From (B) and (C) above we can see that $(\sigma_{a,b}\cdot u\e_i,S)$ is admissible if $a$ and $b$ are admissible.

The following is proved in \cite{F5C}.

\begin{proposition}
Let $i,k\in \N$. Let $h_0,\ldots,h_{m-1}\in\pring$ such that $h_{i+1}=\ldots=h_{m-1}=0$ and $\sig(k)=\LM(h_i)\e_i$.
$\sig(k)$ is not the minimal signature of $\poly(k)$ if and only if there exists a syzygy $(z_0,\ldots,z_{m-1})\in P^m$ of $f_0,\ldots,f_{m-1}$ such that
\begin{itemize}
\item $\sig(k)$ is a signature of $z_0 f_0 + \cdots z_{m-1} f_{m-1}$;
\item if $t\e_j$ is the minimal signature of $\poly(k)$, then $h_k-z_k=0$ for all $k>j$ and $\LM(h_j-z_j)=t$. 
\end{itemize}
\end{proposition}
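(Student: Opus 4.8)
The plan is to unwind the definitions and then prove the two implications separately: the forward one by an explicit construction, and the reverse one by a leading-term cancellation argument. First I would fix notation. Write $p=\poly(k)$ and let $p=h_0f_0+\cdots+h_{m-1}f_{m-1}$ be the admissible representation recording $\sig(k)$, so $h_{i+1}=\cdots=h_{m-1}=0$, $h_i\neq0$, and $\sigma:=\sig(k)=\LM(h_i)\e_i$. I would use the convention that a signature $u\e_\ell$ is \emph{a signature of} $g\in\pring$ when $g$ has a representation $g=\sum_\ell g_\ell f_\ell$ whose largest index $\ell$ with $g_\ell\neq0$ satisfies $\LM(g_\ell)=u$, and that the \emph{minimal signature} of $g$ is the least such over all representations. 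The given representation shows $\sigma$ is a signature of $p$, so $\sigma$ is automatically $\ge$ the minimal signature $\tau=t\e_j$, and ``$\sig(k)$ is not minimal'' is the same as $\sigma>\tau$ (which forces $i\ge j$). Likewise I read ``$\sig(k)$ is a signature of $z_0f_0+\cdots+z_{m-1}f_{m-1}$'', for a syzygy $(z_\ell)$, to mean precisely that $z_\ell=0$ for $\ell>i$, $z_i\neq0$, and $\LM(z_i)=\LM(h_i)$. I would treat the substantive case $p\neq0$; when $p=0$ the notion of minimal signature has to be set up via syzygy-signatures, and the same manipulations go through with the second bullet invoked as well.

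For the forward implication, assume $\sigma>\tau$ and pick a representation $p=g_0f_0+\cdots+g_{m-1}f_{m-1}$ realizing $\tau$, so $g_{j+1}=\cdots=g_{m-1}=0$ and $\LM(g_j)=t$. Put $z_\ell:=h_\ell-g_\ell$; then $\sum_\ell z_\ell f_\ell=p-p=0$, so $(z_\ell)$ is a syzygy. Since $\sigma>\tau$, either $i>j$, in which case $g_i=0$ and hence $\LM(z_i)=\LM(h_i)$, or $i=j$, in which case $\LM(g_i)=t<\LM(h_i)$ so again $\LM(z_i)=\LM(h_i)$ with no cancellation; in both cases $z_\ell=0$ for $\ell>i$ because $h_\ell$ and $g_\ell$ both already vanish there (recall $i\ge j$), so the first bullet holds. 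For the second bullet observe $h_\ell-z_\ell=g_\ell$, which is $0$ for $\ell>j$ and has leading monomial $t$ at $\ell=j$.

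For the reverse implication, suppose a syzygy $(z_\ell)$ with the stated properties is given; by the first bullet $z_\ell=0$ for $\ell>i$, $z_i\neq0$, and $\LM(z_i)=\LM(h_i)$, so $\LC(z_i)\neq0$. Let $c:=\LC(h_i)/\LC(z_i)\in\F$. Then $(cz_\ell)$ is again a syzygy and $p=\sum_\ell(h_\ell-cz_\ell)f_\ell$ is a valid representation of $p$; I claim its signature is strictly below $\sigma$. Indeed $h_\ell-cz_\ell=0$ for every $\ell>i$, and at index $i$ the coefficient of the monomial $\LM(h_i)=\LM(z_i)$ in $h_i-cz_i$ is $\LC(h_i)-c\,\LC(z_i)=0$, so either $h_i-cz_i=0$, in which case (using $p\neq0$) the largest nonzero index of $(h_\ell-cz_\ell)$ is some $j'<i$ and the signature is $<\sigma$, or $h_i-cz_i\neq0$ with $\LM(h_i-cz_i)<\LM(h_i)$, giving signature $\LM(h_i-cz_i)\e_i<\LM(h_i)\e_i=\sigma$. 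Either way $\sigma$ strictly exceeds a signature of $p$, so it is not the minimal one. Note the second bullet plays no role here; it is produced only to make the forward direction's witness satisfy the full statement.

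The main obstacle I expect is interpretive bookkeeping rather than mathematical depth: one must commit to reading ``$\sig(k)$ is a signature of $\sum_\ell z_\ell f_\ell$'' as a statement about the particular tuple $(z_\ell)$, and one must be careful that the rescaling by $c$ in the reverse direction is legitimate --- it is, since multiplying a syzygy by a unit of $\F$ is still a syzygy and does not change its leading monomial or recorded signature, yet it does change $h_i-cz_i$, which is exactly what forces the leading term to cancel. A secondary point is keeping the order relation on signatures ($i>j$ versus $i=j$) straight, which is why I would phrase everything in terms of ``largest nonzero index'' throughout.
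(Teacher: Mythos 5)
The paper does not actually give a proof of this proposition; it simply states ``The following is proved in \cite{F5C}'' and moves on. So there is no in-paper argument to compare against, and your proposal is effectively supplying a proof the paper omits.

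On its own merits, your argument is sound for the substantive case $\poly(k)\neq 0$. The forward direction's construction $z_\ell = h_\ell - g_\ell$ from a minimal-signature representation $(g_\ell)$ is exactly the right move: both bullets then fall out of $h_\ell - z_\ell = g_\ell$, and the case split $i>j$ versus $i=j$, $t<\LM(h_i)$ correctly shows $\LM(z_i)=\LM(h_i)$ with no cancellation. The reverse direction's rescaling by $c=\LC(h_i)/\LC(z_i)$ is legitimate and forces the leading-term cancellation at index $i$, yielding a representation of $\poly(k)$ with strictly smaller signature; your observation that bullet two is logically superfluous here is correct, since weakening the hypothesis of that implication only strengthens the biconditional. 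The interpretive commitments you make explicit (reading ``$\sig(k)$ is a signature of $\sum z_\ell f_\ell$'' as a property of the tuple $(z_\ell)$, not of the polynomial $0$; using a fresh summation index to disambiguate the paper's overloaded $k$) are the right readings of a somewhat loosely worded statement.

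The one genuine soft spot is the $\poly(k)=0$ case, which you wave off in a sentence. There the reverse direction can break: $(h_\ell - cz_\ell)$ might be identically zero, in which case it has no signature at all and you cannot conclude that $\sigma$ exceeds a smaller signature of $\poly(k)$. This is precisely where bullet two would have to be invoked to rule out total cancellation, contrary to your remark that it plays no role. Since labelled polynomials with zero polynomial part do arise in $F_5$-class algorithms (syzygies detected at runtime), this is not a vacuous case, and a complete proof should either handle it explicitly or state up front that $\poly(k)\neq 0$ is assumed. That said, the main mathematical content is correct and cleanly presented.
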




From this proposition it follows that we only need to consider S-polynomials with minimal signatures.

Suppose that all syzygies of $F$ are generated by trivial syzygies of the form $f_i \e_j - f_j \e_i$. If $\sig(k)$ is not minimal, then some multiple of a principal syzygy $m(f_i \e_j − f_j \e_i)$ has the same signature $\sig(k)$. This provides an easy test for such a non-minimal signature and thus reductions to zero. Since all syzygies are in the module of trivial syzygies, the signature must be a multiple of the leading monomial of a polynomial already in the basis.

\begin{theorem}[$F_5$ Criterion]
An S-polynomial with signature $t\e_i$ is redundant and can be discarded if there exists some $g$ with $\idx(g) < i$ such that $\LM(g) \mid t$.
\end{theorem}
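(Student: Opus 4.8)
The strategy is to manufacture, from the basis element $g$, an explicit syzygy of $f_0,\ldots,f_{m-1}$ whose signature is exactly $t\e_i$. By the proposition quoted from \cite{F5C} (or by the short direct argument below) this forces $t\e_i$ \emph{not} to be the minimal signature of that S-polynomial's polynomial part; and, as noted immediately after that proposition, only S-polynomials carrying a minimal signature need to be reduced. Hence the S-polynomial may be discarded.

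First I would unpack the two admissibility hypotheses that are available. Since the S-polynomial has signature $t\e_i$ and every element of $L$ is admissible, the labelled polynomial $(t\e_i,S)$ is admissible: there are $h_0,\ldots,h_{m-1}\in\pring$ with $S=h_0 f_0+\cdots+h_i f_i$, $h_{i+1}=\cdots=h_{m-1}=0$ and $\LM(h_i)=t$. Writing $j=\idx(g)$, admissibility of $g$ likewise yields $h'_0,\ldots,h'_j\in\pring$ with $\poly(g)=h'_0 f_0+\cdots+h'_j f_j$ and $\sig(g)=\LM(h'_j)\e_j$; set $w=\LM(\poly(g))$. The crucial observation is the trivial relation $f_i\cdot\poly(g)-\poly(g)\cdot f_i=0$, which I would repackage as the module element
\[
z \;=\; \bigl(f_i h'_0,\,f_i h'_1,\,\ldots,\,f_i h'_j,\,0,\ldots,0,\,-\poly(g),\,0,\ldots,0\bigr)\in\pring^{\,m},
\]
with $-\poly(g)$ placed in coordinate $i$ (no collision, since $j<i$). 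A one-line check gives $\sum_\ell z_\ell f_\ell = f_i\poly(g)-\poly(g)f_i = 0$, so $z$ is a syzygy, and its highest nonzero coordinate is the $i$-th, whose leading monomial is $w$; hence the signature of $z$ is $w\e_i$. Since $w=\LM(\poly(g))$ divides $t$, write $t=w\cdot s$ with $s$ a monomial; then $s\cdot z$ is again a syzygy, now with signature $sw\e_i=t\e_i$.

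It then remains to deduce non-minimality, and this is elementary given $z$. Let $c\in\F$ be the scalar for which the $i$-th coordinate of $h+c\,(s\cdot z)$, namely $h_i-c\,s\,\poly(g)$, has leading monomial strictly below $t$ (or vanishes); such a $c$ exists precisely because $\LM\!\bigl((s\cdot z)_i\bigr)=sw=t=\LM(h_i)$ and $\poly(g)\neq 0$. Then $h+c\,(s\cdot z)$ is another representation of $S$ (we have added $c$ times a syzygy, which evaluates to $0$), it still has every coordinate above $i$ equal to zero, and its signature is strictly smaller than $t\e_i$. Consequently $t\e_i$ is not the minimal signature of $S$, so by the remark following the cited proposition the S-polynomial contributes nothing new to the Gr\"obner basis and can be skipped.

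The main obstacle is less the construction — which is the classical principal-syzygy argument with a basis element $g$ in the role of a generator — than the signature bookkeeping around it: one must check that $t\e_i$ genuinely arises as a \emph{naturally inferred} signature from an admissible pair (so that $(t\e_i,S)$ is admissible to begin with), that $z$ realizes its signature in the precise sense used by the proposition (top nonzero coordinate, then leading monomial of that coordinate), and that the degenerate case $\poly(g)=0$, which does not occur for genuine basis elements, is ruled out. It is also worth stating explicitly that, in contrast to the completeness direction discussed just before the theorem, this sufficiency criterion requires no hypothesis that the syzygies of $F$ be generated by the principal ones: the witnessing syzygy $z$ is exhibited by hand.
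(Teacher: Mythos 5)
Your proof is correct, and in fact more careful than what the paper offers: the paper supplies no formal proof of this theorem at all. The paragraph preceding the statement is a heuristic that, \emph{under the extra hypothesis} that all syzygies of $F$ are generated by the Koszul syzygies $f_i\e_j - f_j\e_i$, argues the \emph{converse} --- that every non-minimal signature is necessarily a multiple of a leading monomial of lower index. That is a completeness-type claim (the test misses nothing); the theorem as written is a soundness claim (everything the test flags is genuinely redundant), and your argument supplies exactly that, unconditionally. The crux is your explicit construction of the witnessing syzygy, $z=\sum_{\ell\le j}h'_\ell\,(f_i\e_\ell - f_\ell\e_i)$, built from the admissible representation $\poly(g)=\sum_{\ell\le j}h'_\ell f_\ell$; its top nonzero coordinate is $-\poly(g)$ in slot $i$, so it carries signature $\LM(\poly(g))\,\e_i$, and scaling by $s=t/\LM(\poly(g))$ followed by cancellation of $\LT(h_i)$ produces a representation of $S$ with strictly smaller signature. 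Your closing observation is right and worth keeping: this direction needs no regularity or trivial-syzygy hypothesis, which the paper's heuristic invokes only for the converse. One small clean-up to record explicitly: if after cancellation $h_i-cs\,\poly(g)=0$, the signature index of the new representation drops strictly below $i$, which is still smaller in the module ordering; and if every coordinate vanishes then $S=0$ and the discard is trivially correct, so the degenerate case poses no difficulty.
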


Another application of the signatures consists in ``rewrite rules''.

\begin{definition}
A \textbf{rule} is any $(\sigma,k)\in\sigset\times\N$ such that $\sigma=\sig(k)$.
\end{definition}

The algorithm uses a global variable, $Rules$, which is a list of $m$ lists of \emph{rules}. We can view the elements of any $Rules_i$ in two ways.
\begin{itemize}
\item Each element of $Rules_i$ designates a ``canonical reductor'' for certain monomials, in the following sense.
Let $f, g_1, g_2\in\pring$ and assume that $\LM(g_1),\LM(g_2)\mid\LM(f)$ and $\idx(f)=\idx(g_1)=\idx(g_2)$. In a traditional algorithm to compute a Gr\"obner basis, the choice of whether to reduce $f$ by $g_1$ or by $g_2$ is ambiguous, and either may be done.
In $F_5$ class algorithms, by contrast, \emph{there is no such choice!} One \emph{must} reduce $\LM(f)$ by exactly one of the two, depending on
which appears later in $Rules_i$. A similar technique is used by involutive methods to compute Gr\"obner bases~\cite{involutive}.
For both methods, the restriction to one canonical reductor appears to improve performance dramatically.
\item Each element of $Rules_i$ corresponds to a ``simplification rule''; that is, a linear dependency already discovered. From the ``polynomial'' perspective, $(\sigma,k)\in Rules_i$ only if either $k < m$ or there exist $a,b\in\N$, $h_j\in\pring$, and monomials $t,u$ such that
\begin{itemize}
\item $S$ was first computed as the $S$-polynomial $t\cdot\poly(a) - u\cdot\poly(b)$  of $\poly(a)$ and $\poly(b)$;
\item $S = \sum_{j\neq k}h_j\cdot\poly(j) + \poly(k)$ with $\LM(h_j\poly(j))\leq\LM(S)$ for each $j$; and
\item $\sigma = \sig(k)$ is the naturally inferred signature of $S$.
\end{itemize}
In matrix-$F_5$, instead of starting from scratch from the original $f_i$ for each degree $d$, the matrix $\Mac{d-1}$ is used to construct the matrix $\Mac{d}$ in order to re-use the linear dependencies discovered at degree $d-1$. The same task is accomplished by the set of simplification rules in $Rules_i$, but instead of computing all multiples of the elements in $Rules_i$ we merely use it as a lookup table to replace a potential polynomial by an element from $L$ where reductions by smaller signatures were already performed.
\end{itemize}

Strictly speaking, any rule is somewhat redundant: if $(\sigma,k)\in Rules_i$ then we know that $\sigma=t\e_i$ for some monomial $t$. Hence it is sensible to store only $t$ rather than $\sigma$.

\section{Pseudocode}

We can now define the main loop of the $F_5$ algorithm (cf.~Algorithm~\ref{alg:f5}). This is similar to the main loop of $F_4$ except that:
\begin{itemize}
 \item for each input polynomial $f_i$ we create the labelled polynomial $(1\cdot\mathbf{e_i},\LC(f_i)^{-1}\cdot f_i)$, which is obviously admissible; and
 \item for each computed polynomial $f_i$, the rule $(\sig(i),i)$ is added to $Rules_{\idx(i)}$.
\end{itemize}

\begin{algorithm}
\caption{$F_{4/5}$} 
\KwIn{$F$ -- a list of homogeneous polynomials $f_0,\dots,f_{m-1}$}
\KwResult{a Gröbner basis for $F$}
\SetKwFunction{KwRed}{reduction}
\SetKw{KwAnd}{and}
\SetKw{KwWith}{with}
\Begin{
sort $F$ by total degree\;
$L, G, P \longleftarrow [], \varnothing, []$\;
\For{$0 \leq i < m$}{
  append $(1 \cdot \e_i, LC(f_i)^{-1} \cdot f_i)$ to $L$\;
  \textsc{Add Rule}$(1 \cdot \e_i, i)$\;
  $P \longleftarrow P \bigcup \{$\textsc{Update}$_{F5}(i,j,G): \forall j \in
G\}$\;
  add $i$ to $G$\;
}

\While{$P \neq \varnothing$}{
  $d \longleftarrow$ the minimal degree in $P$\;
  $P_d \longleftarrow$ all pairs with degree $d$\;
  $P \longleftarrow P\setminus P_d$\;
  $S \longleftarrow$ \textsc{S-Polynomials}$_{F5}$($P_d$)\;
  $\tilde{S} \longleftarrow$ \textsc{Reduction}$_{F5}$($S,G$)\;
  \For{$i \in \tilde{S}$}{
    $P \longleftarrow P \bigcup \{$\textsc{Update}$_{F5}(i,j,G): \forall j \in
G\}$\;
    add $i$ to $G$\;
  }
}
\Return{$\{\poly(f)\ |\ \forall f \in G\}$}\;
}
\label{alg:f5}
\end{algorithm}

The subroutine \textsc{Update}$_{F5}$ constructs a new critical pair for two labelled polynomials indexed in $L$. A critical pair in $F_5$ is represented the same way as a critical pair in $F_4$, except that the polynomials are replaced by indices to labelled polynomials.

Just like the routine \textsc{Update} in $F_4$ imposes the Buchberger criteria, \textsc{Update}$_{F5}$ imposes the $F_5$ criteria. These checks are:

\begin{itemize}

 \item Make sure that the multipliers that give rise to the components of the S-polynomial are not in the leading monomial ideal spanned by the leading monomials of the polynomials with index smaller than the S-polynomial component. This would imply that the natural signature which the algorithm would assign to the S-polynomial is not the minimal signature, and can be discarded by the $F_5$ criterion.

 \item Check whether a rule forbids generating one component of the S-polynomial. This has the same purpose as reusing $\Mac{d-1}$ for $\Mac{d}$ in matrix-$F_5$. If a component $u \cdot r$ of the S-polynomial is rewritable, this means that there is an element which can replace it which has probably had more reductions applied to it already. The element that rewrites the rewritable component was either already considered or will be considered in the future. Thus this avoids re-computation of the same linear combinations.

 \item Ensure that the signature of the resulting S-polynomial is the one that we would infer naturally. This should be the larger signature of the components; that is, that the labelled polynomial remains admissible.
\end{itemize}

\begin{algorithm}[htbp]
\caption{\textsc{Update}$_{F5}$}
\label{alg:critpair} 
\KwIn{$k$ -- an integer $0 \leq k < |L|$}
\KwIn{$l$ -- an integer $0 \leq l\neq k < |L|$}
\KwIn{$G$ -- a list of integers with elements $e$ such that $0 \leq e < |L|$}
\KwResult{the critical pair for $\poly(k)$ and $\poly(l)$, iff the $F_5$
criteria pass.}
\SetKw{KwAnd}{and}
\SetKw{KwOr}{or}
\Begin{
$t_k, t_l \longleftarrow \LT(\poly(k)), \LT(\poly(l))$\;
$t \longleftarrow \LCM(t_k,t_l)$\;
$u_k, u_l \longleftarrow t/t_k,t/t_l$\;
$(m_k, \e_k), (m_l, \e_l) \longleftarrow \sig(k), \sig(l)$\;


\If{\textsc{Top-reducible(}$u_k \cdot m_k$, \{$g_i \in G$: $\idx(g_i) <
\e_k$\}\textnormal{)}}{\Return\;}
\If{\textsc{Top-reducible(}$u_l \cdot m_l$, \{$g_i \in G$: $\idx(g_i) <
\e_l$\}\textnormal{)}}{\Return\;}

\If{\textsc{Rewritable}($u_k,k$) \KwOr
\textsc{Rewritable}($u_l,l$)}{\Return\;}

\If{$u_k \cdot \sig(k) < u_l \cdot \sig(l)$}{
  swap $u_k$ and $u_l$\;
  swap $k$ and $l$\;
}
\Return{$(t,u_k,k,u_l,l)$}\;
}
\end{algorithm}

The routine \textsc{S-Polynomials}$_{F5}$ first checks the rewritable criterion again, in case new elements have been created which would rewrite a component after creation of the critical pair. Then it computes the actual S-polynomials in such a way that only the part is computed which gives rise to the new signature. The subtraction of the other component and thus the cancellation of leading terms is delayed to the reduction routine. Indeed, \textsc{S-Polynomials}$_{F5}$ discards the component $(v,l)$ and relies on \textsc{Symbolic Preprocessing}$_{F5}$ to find a reductor for $u\cdot \poly(l)$.
We delay the rationale for this until the discussion of that algorithm; see below.

\begin{algorithm}[htbp]
\KwIn{$P$ -- a list of critical pairs}
\KwResult{a list of S-polynomials}
\SetKw{KwAnd}{and}
\SetKw{KwOr}{or}
\SetKw{KwContinue}{continue}
\Begin{
$S \longleftarrow \varnothing$\;
sort $P$ by increasing signature\;
\For{$(t,u,k,v,l) \in P$}{
  \If{\textsc{Rewritable(}$u,k$\textnormal{)} \KwOr
\textsc{Rewritable(}$v,l$\textsc{)}}{
   \KwContinue\;
  }
  add $(u, k)$ to $S$\;
}
sort $S$ by signatures\;
\Return{$S$}\; 
}
\caption{\textsc{S-Polynomials}$_{F5}$}
\label{alg:compspols} 
\end{algorithm}

The routine \textsc{Add Rule} simply adds an entry to the list $Rules_i$ encoding that the signature $\sigma$ corresponds to the labelled polynomial $k$.
Note, however, that $F_{4/5}$ sorts the list $Rules_i$ by $t$, while other versions of $F_5$ simply append new rules at the end of the list. The latter approach ensures that $Rules_i$ is sorted by degree of $t$, but it does not necessarily impose an ordering w.r.t. to the monomial ordering on $Rules_i$. 

\begin{algorithm}
\caption{\textsc{Add Rule}}
\KwIn{$\sigma$ --a signature}
\KwIn{$k$ -- an integer $0 \leq k < |L|$}
\Begin{
let $t$, $i$ be such that $t \cdot \e_i = \sigma$\;
insert $(t,k)$ into $Rules_i$ such that the order on $t$ is preserved\;
}
\label{alg:addrule} 
\end{algorithm}

The routine \textsc{Rewritable} determines whether $u\cdot\sig(k)$ is rewritable, as outlined in the Section~\ref{sec:background}.

\begin{algorithm}
\KwIn{$u$ -- a monomial}
\KwIn{$k$ -- an integer $0 \leq k < |L|$}
\KwResult{true iff $u \cdot \sig(k)$ is rewritable}
\SetKw{KwAnd}{and}
\Begin{
let $t$, $i$ be such that $t \cdot \e_i = \sig(k)$\;
\For{$|Rules_i| > ctr \geq 0$}{
  $(v,j) \longleftarrow Rules_i[ctr]$\;
  \If{$v\ |\ (u \cdot t)$}{
    \Return{$j\neq k$}\;
  }
}
\Return{false}\;
}
\caption{\textsc{Rewritable}}
\label{alg:rewritable} 
\end{algorithm}

\begin{algorithm}
\KwIn{$t$ -- a monomial}
\KwIn{$G$ -- a set of indices in $L$}
\KwResult{true iff $t$ is top-reducible by any element in $G$}
\Begin{
\For{$g \in G$}{
  \If{$\LM(\poly(g))\ |\ t$} {
   \Return{true}\;
  }
}
\Return{false}\;
}
\caption{\textsc{Top-reducible}}
\label{alg:topreducible} 
\end{algorithm}

Algorithm \textsc{Reduction}$_{F5}$ organises the reduction of the S-polynomials. It first calls \textsc{Symbolic\ Preprocessing}$_{F5}$ to determine which monomials and which polynomial multiples might be encountered while reducing the S-polynomials. The resulting list of polynomial multiples is sorted in decreasing order of their signatures, in order to avoid reducing a polynomial by another with a larger signature (a phenomenon called ``signature corruption'' which has catastrophic consequences on the computation of the basis). Reduction then calls \textsc{Gaussian\ Elimination}$_{F5}$, which transforms the list of polynomials into a matrix, performs Gaussian elimination without swapping rows or columns, then extracts the polynomials from the matrix.
``New'' polynomials in the system are identified by the fact that their leading monomials have changed from that of the polynomials in $F$: that is, a reduction of the leading monomial took place.
We add each new polynomial to the system, and create a new rule for this polynomial.

Sometimes, a reductor has signature larger than the polynomial that it would reduce. To avoid signature corruption, $F_5$ class algorithms consider this as another S-polynomial, and as a consequence generate a new polynomial. However, \textsc{Symbolic\ Preprocessing}$_{F5}$ cannot know beforehand whether this new polynomial is indeed necessary, so it does not generate a new rule, nor add it to $L$. This is done in \textsc{Reduction}$_{F5}$.

\begin{algorithm}
\KwIn{$S$ -- a list of S-polynomials indexed in $L$}
\KwIn{$G$ -- a list of polynomials indexed in $L$}
\KwResult{the top-reduced set $\tilde{S}$}
\SetKw{KwContinue}{continue}
\SetKw{KwAnd}{and}
\Begin{
$F,T \longleftarrow $\textsc{Symbolic\ Preprocessing}$_{F5}$($S,G$)\;
$\tilde{F} \longleftarrow$ \textsc{Gaussian Elimination}$_{F5}(F,T)$\;
$\tilde{F}^+ \longleftarrow \varnothing$\;
\For{$0 \leq k < |F|$}{
  $(u, i) \longleftarrow F_k$\;
  $\sigma \longleftarrow \sig(i)$\;
  \If{$u\cdot\LM(\poly(i)) = \LM(\tilde{F}_k)$}{\KwContinue\;}
  $\tilde{p} \longleftarrow \tilde{F}_k$\;
  append $(u \sigma,\tilde{p})$ to $L$; \tcp{Create new entry}
  \textsc{Add Rule}$(u \sigma,|L|-1)$\;
  \If{$\tilde{p} \neq 0$}{add $i$ to $\tilde{F}^+$\;}

}
\Return{$\tilde{F}^+$}\;
}
\caption{\textsc{Reduction}$_{F5}$}
\label{alg:reduction5} 
\end{algorithm}

\begin{algorithm}
\KwIn{$S$ -- a list of components of S-polynomials}
\KwIn{$G$ -- a list of polynomials indexed in $L$}
\KwResult{$F$ -- a list of labelled polynomials that \emph{might} be used during reduction of the S-polynomials of $S$}
\Begin{
$F \longleftarrow S$\;
$Done \longleftarrow \varnothing$\;
let $M'$ be the monomials of $\{\poly(k) \mid \forall k \in F\}$\;  
\While{$M' \neq Done$}{
  let $m$ be maximal in $M' \setminus Done$\;
  add $m$ to $Done$\;
  let $\sigma$ be minimal in $\left\{\sig(k)\ |\ k \in F \textrm{ and } m \textrm{ is a monomial of } \poly(k)\right\}$\;
  $t, k \longleftarrow$ \textsc{Find Reductor}$(m, \sigma, G, F)$\;
  \If{$t\neq 0$}{
   append $(t, k)$ to $F$\;
   add the monomials of $t\cdot\poly(k)$ to $M'$\;  
  }
}
sort $F$ by decreasing signature\;
\Return{$F, Done$}
}
\caption{\textsc{Symbolic Preprocessing}$_{F5}$}
\label{alg:symbolic_preprocessing5}
\end{algorithm}

\begin{algorithm}
\KwIn{$m$ -- a monomial}
\KwIn{$G$ -- a list of polynomials indexed in $L$}
\KwIn{$F$ -- a list of primary generators of $S$-poly\-nomials}
\SetKw{KwContinue}{continue}
\Begin{
\For{$k \in G$}{
 \If{$\LM(\poly(k)) \nmid m$}{\KwContinue\;}
 $u \longleftarrow m/\LM(\poly(k))$\;
 \If{$(u, k)\in F$}{\KwContinue\;}
 let $t\cdot\e_{i}$ be $\sig(k)$\;
 \If{\textsc{Top-reducible(}$u\cdot t$, \{$g \in G \mid \idx(g) < i$\}\textsc{)}}
 {\KwContinue}
 \If{\textsc{Rewritable(}$u,k$\textsc{)}}{\KwContinue\;}
 \Return{$u,k$}
}
\Return{0, -1}
}
\caption{\textsc{Find Reductor}}
\label{alg:find_reductor}
\end{algorithm}

The routine \textsc{Find Reductor} tries to find a reductor for a monomial $m$ with signature $\sigma$ in $G$. After checking the normal top reduction criterion it applies the same criteria to $t \cdot k$ as \textsc{Update}$_{F5}$ applies to the components of each S-polynomial.
However, we have added another check that does not appear in traditional pseudocode for F5: whether $u\cdot\LM(\poly(k))\in F$.

This returns us to a topic alluded to in the discussion of \textsc{S-Polynomials}$_{F5}$.
Recall that, in \textsc{S-Polynomials}$_{F5}$, we deferred the construction of $v\cdot\poly(l)$.
In most cases, there will be a choice of reductors for $u\cdot\poly(k)$; hypothetically, $v\cdot\poly(l)$ might not be the choice of \textsc{Symbolic Preprocessing}$_{F5}$. This would imply that the $S$-poly\-nomial of $\poly(k)$ and $\poly(l)$ might not be computed, even though it is necessary.
In fact, this cannot happen!
By way of contradiction, suppose that \textsc{Find Reductor} chooses $(t,j)$ to reduce $(u,k)$ and $(v,l)$ is not used to build the matrix:
then $$\LCM(\LM(\poly(j)),\LM(\poly(l))) \leq \LCM(\LM(\poly(k)),\LM(\poly(l))).$$
Since the algorithm proceeds by ascending degree, it must also be considering the critical pair for $\poly(j)$ and $\poly(l)$, if it did not do so at a lower degree. We consider two cases.
\begin{itemize}
\item Suppose that the algorithm rejected a generator of the $S$-poly\-nomial of $\poly(j)$ and $\poly(l)$; the criteria would clearly reject multiples of these generators as well.
This leads to a contradiction: either \textsc{Rewritable} would have rejected $v\cdot\poly(l)$, so that \textsc{S-Polynomials}$_{F5}$ would not have computed the $S$-poly\-nomial of $\poly(k)$ and $\poly(l)$, or \textsc{Find\ Reductor}$_{F5}$ would have rejected $t\cdot\poly(j)$ as a reductor.
\item Suppose instead that the $S$-poly\-nomial of $\poly(j)$ and $\poly(l)$ either has been computed, or is being computed at this degree.
These two possibilities also lead to a contradiction.
\begin{itemize}
\item If it is being computed \emph{at this degree}, then one of $(t,j)$ or $(v,l)$ already appears in $F$.
If $(t,j)$ appears, then the second if statement of \textsc{Find\ Reductor} precludes it from selecting $(t,k)$ as a reductor of $(u,k)$ instead of $(v,l)$.
\item If, on the other hand, the $S$-poly\-nomial of $\poly(j)$ and $\poly(l)$ was computed at a lower degree, then the new polynomial would have a signature that rewrites one of $t\cdot\poly(j)$ or $v\cdot\poly(l)$ --- so that the algorithm either cannot select $(t,j)$ as a reductor, or it deems $v\cdot\poly(l)$ rewritable, which means that it does not compute the $S$-poly\-nomial of $\poly(k)$ and $\poly(l)$!
\end{itemize}
\end{itemize}
The only way to avoid a contradiction is for the algorithm to include $v\cdot\poly(l)$ in the matrix: either because it is already in the matrix, or because it is selected as a reductor of $u\cdot\poly(k)$.
Therefore, the reformulated pseudocode does in fact compute all necessary $S$-poly\-nomials.

The algorithm \textsc{Gaussian\ Elimination}$_{F5}$ constructs a matrix $A$ whose entries $a_{ij}$ correspond to the coefficient of the $j$th monomial of the $i$th product listed in the input $F$. 
Subsequently, \textsc{Gaussian\ Elimination}$_{F5}$ computes a row-echelon reduction of the matrix, but in a straitjacketed sense: to respect the monomial ordering, we cannot swap columns, and to respect the signatures, we cannot swap rows, nor can we reduce lower rows (which have smaller signatures) by higher rows (which have larger signatures). As a result, each non-zero row has a unique pivot, but the appearance of the resulting matrix may not, in fact, be triangular. This is also why we must reset the index $i$ after any successful reduction to the top of the matrix, in case rows of higher signature can be reduced by the new row. 

Finally, \textsc{Gaussian\ Elimination}$_{F5}$ returns a list of polynomials corresponding to the rows of the matrix $A$. Strictly speaking, there is no need to expand those polynomials of $F$ whose leading monomials have \emph{not} changed, since \textsc{Reduction}$_{F5}$ will discard them anyway. Thus, a natural optimisation would be to return the matrix $A$ to \textsc{Reduction}$_{F5}$, determine in that procedure which rows of the matrix need to be expanded, and expand only them. We have chosen to expand all of $A$ in the pseudocode in order to encapsulate the matrix entirely within this procedure.

\begin{algorithm}
\caption{\textsc{Gaussian\ Elimination}$_{F5}$}
\label{alg:gaussian_elimination}
\KwIn{$F$ -- a list of pairs $(u,k)$ indicating that the product $u\cdot\poly(k)$ must be computed}
\KwIn{$T$ -- a list of all the monomials in $F$}
\KwResult{$\tilde{F}$ -- a list of labelled polynomials}
\SetKw{KwContinue}{continue}
\SetKw{KwBreak}{break}
\SetKw{KwAny}{any}
\Begin{
  $m, n \longleftarrow |F|, |T|$\;
  denote each $F_i$ by $(u_i,k_i)$\;
  let $A$ be the $m\times n$ matrix such that $a_{ij}$ is the coefficient of
  $T_j$ in $u_i\cdot\poly(k_i)$\;
  \For{$0 \leq c < n$}{
    \For{$0 \leq r < m$}{
      \If{$a_{rc} \neq 0$}{
        \tcp{Ensure that we are only reducing by leading terms}
        \lIf{\KwAny $a_{ri} \neq 0 \mid 0 \leq i < c$}{\KwContinue\;}
        rescale the row $r$ such that the entry $a_{rc}$ is 1\;
        \For(\tcp*[h]clear below){$r+1 \leq i < m$}{
          \If{$a_{ic} \neq 0$}{
             eliminate the entry $a_{ic}$ using the row $r$\;
          }
        }
        \KwBreak;
      }
    }
  }
  let $\tilde{F}=A\cdot T=\left[\sum_{j=0}^{n-1} a_{ij}\cdot t_i\right]_{i=0}^{m-1}$\;
  \Return{$\tilde{F}$}
}
\end{algorithm}

\section{Correctness}

Since $F_{4/5}$ follows the general structure of $F_4$ it is helpful to assert that $F_4$ is correct.

\begin{lemma}
\label{lem:f4_correct}
When $F_4$ terminates it returns a Gröbner basis.
\end{lemma}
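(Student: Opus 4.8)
The plan is to reduce the statement to Buchberger's criterion: a finite set $G \subseteq \pring$ is a Gröbner basis of the ideal it generates if and only if, for every pair $g,h \in G$, the S-polynomial $S(\poly(g),\poly(h))$ reduces to $0$ modulo $G$. Accordingly I would establish two facts about the set $G$ held by $F_4$ at the moment it terminates. First, that the ideal generated by $\{\poly(g) : g \in G\}$ equals $\ideal{F}$, so that $G$ generates the right ideal. Second, that for every critical pair of elements of $G$ the associated S-polynomial has a standard representation over $G$ with leading monomial bounded by that of the S-polynomial --- equivalently, it top-reduces to $0$ over $G$. Buchberger's criterion then gives the conclusion, since the list \textsc{Reduction} returns is exactly $\{\poly(g) : g \in G\}$.

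The ideal-equality claim is a routine invariant maintained by induction on the main loop. Initially $G$ consists of the labels of $f_0,\dots,f_{m-1}$, so the ideals agree. Every polynomial that the Gaussian-elimination step later appends to $G$ is, by construction, an $\F$-linear combination of monomial multiples $t \cdot \poly(k)$ with $k$ already in $G$ --- these multiples are precisely the rows of the matrix produced by symbolic preprocessing --- hence it lies in $\ideal{F}$; and no old generator is ever removed. Thus the generated ideal is invariant, and at termination it is still $\ideal{F}$.

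The substantive step is to show that the batched linear algebra genuinely performs reduction. I would argue that symbolic preprocessing closes the current set of monomials under the relation ``is divisible by some $\LM(\poly(k))$ with $k \in G$'': whenever such a monomial $m$ appears, a reductor multiple $t \cdot \poly(k)$ is added to the matrix and its monomials are processed in turn, so the loop halts only when every monomial in the matrix that is not a pivot leading monomial fails to be top-reducible by $G$. Running Gaussian elimination without column swaps replaces each S-polynomial row by a polynomial $\tilde p$ in the same coset modulo $\ideal{F}$ all of whose supporting monomials are, after elimination, either leading monomials already present in $G$ or not top-reducible by $G$. In the first case $\tilde p$ top-reduces to $0$ over $G$; in the second case $F_4$ detects the new leading monomial, appends $\tilde p$ to $G$, and then $\LM(\tilde p) \in \LM(G)$, so again $\tilde p$ top-reduces to $0$ over the enlarged $G$. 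I would additionally note that any critical pair discarded up front by Buchberger's first and second criteria in \textsc{Update} is safe to discard; this is the classical Buchberger-criterion argument and can simply be cited.

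Combining these: at termination $P = \varnothing$, so every critical pair of elements of $G$ was either rejected by a Buchberger criterion or had its S-polynomial placed in some degree batch and reduced as above; in either case the S-polynomial reduces to $0$ modulo the final $G$. Buchberger's criterion then shows $G$ is a Gröbner basis of $\ideal{F}$. The main obstacle is the third step: carefully verifying that symbolic preprocessing selects \emph{enough} reductors and that the straitjacketed (no column swaps, no row swaps) Gaussian elimination really yields top-reduced polynomials, and in particular that a genuinely new leading monomial is correctly identified and fed back into $G$ rather than being mistaken for an already-reducible one. The remaining pieces are bookkeeping or standard Gröbner-basis theory.
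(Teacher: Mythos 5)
The paper does not prove this lemma: it simply cites Faugère's original $F_4$ paper \cite{F4}. Your sketch therefore supplies the content that the paper delegates to that reference, and it is, in outline, the standard correctness argument for $F_4$: ideal invariance of the main loop, closure of the monomial support under top-reduction by $G$ during symbolic preprocessing, and an appeal to Buchberger's criterion once every batched S-polynomial has a standard representation over the (possibly enlarged) $G$, together with the classical justification for the pairs eliminated by Buchberger's criteria. So the route is sound; it is simply longer than what the paper chose to write.

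Two small points are worth tightening. First, you describe the elimination as ``straitjacketed (no column swaps, no row swaps),'' but that constraint is a feature of \textsc{Gaussian Elimination}$_{F5}$, not of Faugère's $F_4$: in $F_4$ proper the matrix is brought to a (reduced) row echelon form with row swaps permitted, since there are no signatures to corrupt. The no-row-swap discipline is introduced by the present paper only for $F_{4/5}$ and is justified separately in Lemma~\ref{lem:gauss_correct}. For Lemma~\ref{lem:f4_correct} you should argue with ordinary row reduction, which in fact removes most of the delicacy you flagged in your ``substantive step.'' Second, the inference from ``$\LM(\tilde p)$ already lies in $\LM(G)$'' to ``$\tilde p$ top-reduces to $0$ over $G$'' needs the full force of symbolic preprocessing: one should say that every monomial occurring in the matrix that is top-reducible by $G$ has a corresponding reductor row, so that after row reduction each row with unchanged leading monomial is $G$-reducible to zero term by term, while each row with a genuinely new leading monomial is adjoined to $G$. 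With that phrasing your reduction to Buchberger's criterion is exactly the argument of \cite{F4}.
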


\begin{citeproof}
See \cite{F4}. 
\end{citeproof}

However, in $F_{4/5}$ we apply the $F_5$ criteria instead of Buchberger's criteria. Thus, we need to prove that these criteria do not discard any S-polynomial which would be needed for a Gröbner basis computation.

\begin{lemma}[\cite{F5C}]
\label{lem:criteria_correct}
Assume that the main loop of Algorithm~\ref{alg:f5} terminates with output $G$. Let $\mathcal{G} = \{\poly(g) \mid g \in G\}$. If every $S$-polynomial $S$ of $\mathcal{G}$ satisfies (A) or (B) where
\begin{itemize}
 \item[(A)] $S$ reduces to zero with respect to $\mathcal{G}$\;
 \item[(B)] a component $u\cdot\poly(k)$ of $S$ satisfies
 \begin{itemize}
   \item[(B1)] $u\cdot\sig(k)$ is not the minimal signature of $u\cdot\poly(k)$; or
   \item[(B2)] $u\cdot\sig(k)$ is rewritable;
 \end{itemize}
\end{itemize}
then $\mathcal{G}$ is a Gröbner basis for $\ideal{f_0,\dots,f_{m-1}}$.
\end{lemma}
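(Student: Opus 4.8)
The plan is to reduce this statement to the standard Buchberger-style criterion for Gr\"obner bases: a set $\mathcal{G}$ is a Gr\"obner basis of the ideal it generates if and only if every $S$-polynomial of pairs from $\mathcal{G}$ reduces to zero modulo $\mathcal{G}$. Since the hypothesis already hands us condition (A) for some $S$-polynomials, the real work is to show that whenever an $S$-polynomial $S$ falls under case (B) --- so (A) may fail --- we can still produce a standard reduction to zero, perhaps after accounting for $S$-polynomials that the algorithm \emph{did} compute. So first I would recall the Buchberger criterion (citing the background or \cite{F4}), and then argue that the \emph{termination hypothesis} --- that the main loop of Algorithm~\ref{alg:f5} halts with output $G$ --- together with the bookkeeping invariants of the algorithm (every element of $L$ is admissible; every polynomial added to $G$ triggers an \textsc{Add Rule}; pairs are processed in order of ascending degree) lets us translate (B1) and (B2) into concrete polynomial identities.

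Next I would treat the two subcases of (B) separately. For (B2), if a component $u\cdot\poly(k)$ of $S$ is rewritable, then by definition of \textsc{Rewritable} there is some $(v,j)\in Rules_{\idx(k)}$ with $j\neq k$ and $v \mid u\,t$, where $\sig(k)=t\e_{\idx(k)}$. Because rules are created precisely when polynomials enter $L$, the labelled polynomial $j$ is already known, and an appropriate multiple of $\poly(j)$ has the same signature as $u\cdot\poly(k)$; replacing the component by this multiple and cancelling, one exhibits $S$ as a combination of \emph{already-processed} $S$-polynomials plus a tail of lower leading monomial. One then invokes an induction on the signature (or on the pair ordering) to conclude that $S$ reduces to zero modulo $\mathcal{G}$. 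For (B1), if $u\cdot\sig(k)$ is not the minimal signature of $u\cdot\poly(k)$, then by the Proposition attributed to \cite{F5C} there is a genuine syzygy of $f_0,\dots,f_{m-1}$ carrying that signature; subtracting the corresponding module element from the natural representation of $S$ lowers the signature without changing the polynomial, and again an induction on signatures finishes the job. The point common to both cases is that the $F_5$ criteria only ever discard an $S$-polynomial when its ``job'' is already done by something of strictly smaller signature, so no new leading term is lost.

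The main obstacle --- and the step I would spend the most care on --- is setting up the induction cleanly so that the appeals to ``already-processed'' or ``smaller-signature'' objects are not circular. Concretely, I would order all $S$-polynomials of $\mathcal{G}$ by their naturally inferred signature (breaking ties by leading monomial), and prove by strong induction on this order that each one reduces to zero modulo $\mathcal{G}$. In the base case the signature is minimal and (B) cannot supply a smaller witness, so one must check directly that the algorithm did compute and fully reduce that $S$-polynomial --- this is where termination of the main loop is essential, since it guarantees every surviving critical pair was eventually handled by \textsc{Reduction}$_{F5}$, and the ``signature corruption'' safeguards in \textsc{Gaussian\ Elimination}$_{F5}$ guarantee the reduction respected signatures. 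The inductive step is then the translation described above: case (B) replaces $S$ by lower-signature material covered by the inductive hypothesis, and case (A) is immediate. A secondary subtlety is that the $S$-polynomials appearing in case (B) are those of the \emph{output} basis $\mathcal{G}$, not only of the input $F$, so I would need to confirm that the rule lists $Rules_i$ and the admissibility invariant, as maintained by \textsc{Reduction}$_{F5}$ and \textsc{Add Rule}, indeed cover every pair of elements of $G$ --- which follows from the fact that \textsc{Update}$_{F5}$ is called for each newly created element against all of $G$.
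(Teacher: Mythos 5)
The paper does not actually prove this lemma; it cites \cite{F5C} and adds a single remark: in Algorithm~\ref{alg:f5} the lists $Rules_i$ are maintained in signature order rather than insertion order, and the authors assert that an inspection of the proof in \cite{F5C} shows this change is harmless. Your outline instead reconstructs the internal structure of the cited proof, and it captures that structure correctly --- the Buchberger criterion as the target, a strong induction on signatures, (B1) resolved by subtracting a syzygy carrying the same signature, (B2) resolved by replacing the rewritable component with its rewriter and observing that the difference has strictly smaller signature. That is the right shape of argument.

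Two remarks. First, you have not addressed the one thing the paper's own proof actually contributes: whether the reordering of $Rules_i$ by signature (instead of by creation order, which tracks degree) affects the rewrite criterion's soundness. Since (B2) is precisely where \textsc{Rewritable} enters, and \textsc{Rewritable} scans $Rules_i$ from the back and returns the \emph{first} match, changing the list order changes \emph{which} rule is selected as the canonical rewriter. The lemma survives because correctness only needs the \emph{existence} of some rewriter $j \neq k$ with matching signature, not a particular choice --- but a blind proof attempt of this lemma, for this paper, ought to flag and discharge exactly this point, since the paper itself singles it out as the only subtlety. Second, in your (B2) discussion, the phrase ``plus a tail of lower leading monomial'' is slightly off: replacing $u\cdot\poly(k)$ by $w\cdot\poly(j)$ with $w\cdot\sig(j) = u\cdot\sig(k)$ generally alters the leading monomial, so the quantity that provably decreases is the \emph{signature} of the residual $u\cdot\poly(k) - w\cdot\poly(j)$, not its leading monomial. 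You do say the induction runs on signature, so the plan is sound, but the intermediate claim should be stated in terms of signatures to avoid circularity in the base case.
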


\begin{citeproof}
See \cite{F5C}.
There is one subtlety to be noted: here we order $Rules_i$ by signature.
An examination of the proof shows that this does not pose any difficulty for correctness.
\end{citeproof}

The other main differences between $F_4$ and $F_{4/5}$ is that we apply a variant of Gaussian elimination in $F_{4/5}$ to perform the reduction. However, as shown below this does not affect the set of leading monomials.

\begin{lemma}
\label{lem:gauss_correct}
Let $F$ be a set of polynomials in $P = \F[x_0,\dots,x_{n-1}]$. Let $\tilde{F}$ be the result of Gaussian elimination and $\tilde{F'}$ the result of Algorithm~\ref{alg:gaussian_elimination} (\textsc{Gaussian Elimination}$_{F5}$). We have that $\LM(\tilde{F})  = \LM(\tilde{F'})$.
\end{lemma}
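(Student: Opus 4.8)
The plan is to show that the two computations differ only by which row-operations are permitted, not by the set of pivot positions, and that the set of pivot positions determines the set of leading monomials. Concretely, write $A$ for the matrix whose rows are the polynomials of $F$ expressed in the monomial basis $T = (t_0 > t_1 > \cdots > t_{n-1})$, ordered so that column $0$ corresponds to the largest monomial. Ordinary Gaussian elimination produces a row-echelon form $R$, and Algorithm~\ref{alg:gaussian_elimination} produces a matrix $A'$. First I would observe that $\textnormal{LM}(\tilde F)$ and $\textnormal{LM}(\tilde F')$ are, respectively, exactly the sets of monomials $t_c$ such that column $c$ is a pivot column of $R$, resp. of $A'$ (ignoring zero rows); so it suffices to prove that $R$ and $A'$ have the same set of pivot columns.

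Next I would argue that both row-reduction procedures compute the same pivot columns because pivot columns are an invariant of the row space together with the column order. For ordinary Gaussian elimination this is standard: column $c$ is a pivot column iff the $c$-th column is not in the span of the columns $0,\dots,c-1$, equivalently iff $\textnormal{rank}(A_{[0,c]}) > \textnormal{rank}(A_{[0,c-1]})$ where $A_{[0,c]}$ denotes the submatrix of the first $c+1$ columns. The key point is that Algorithm~\ref{alg:gaussian_elimination}, despite forbidding row swaps and forbidding reduction of a lower row by a higher one, still clears column $c$ using \emph{some} row whenever column $c$ is not already spanned by the earlier pivot rows: the inner loop scans $r = 0,1,\dots,m-1$ for the first row with a nonzero entry in column $c$ and zero entries in all columns $< c$ (this is the purpose of the \texttt{continue} guard ``\KwAny $a_{ri}\neq 0 \mid 0\le i < c$''). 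I would prove by induction on $c$ the invariant that after processing columns $0,\dots,c-1$, the rows that have been chosen as pivots span the same space as the corresponding pivot rows of $R$, the already-processed columns of $A'$ agree (up to the allowed asymmetry) in which columns were pivots, and a row has all zeros in columns $0,\dots,c-1$ iff the corresponding combination lies in the span of the non-pivot structure. Then column $c$ gets a pivot in $A'$ iff such a cleared row with nonzero $c$-entry exists iff $\textnormal{rank}(A_{[0,c]}) > \textnormal{rank}(A_{[0,c-1]})$ iff column $c$ is a pivot column of $R$.

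The one genuine subtlety — and the step I expect to be the main obstacle — is verifying that the ``no reducing a higher-signature row by a lower one'' and ``no row swaps'' restrictions do not cause Algorithm~\ref{alg:gaussian_elimination} to \emph{miss} a pivot that ordinary elimination would find. One must check that whenever the row space has rank jump at column $c$, there is indeed a row $r$ in the current matrix whose entries in columns $0,\dots,c-1$ are all zero and whose entry in column $c$ is nonzero: this is exactly the content of the induction hypothesis, since after the earlier pivots have cleared their columns downward, every non-pivot row below has zeros in all earlier pivot columns, and not all such rows can vanish in column $c$ if the rank increases. One should also note the \KwBreak{} after handling column $c$ means each column is handled at most once and each row becomes a pivot for at most one column, matching the echelon structure. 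Finally I would remark that leading \emph{coefficients} and the non-leading tails may differ between $\tilde F$ and $\tilde F'$, but this is irrelevant since the lemma asserts only equality of leading-monomial sets, which we have reduced to equality of pivot-column sets, now established.
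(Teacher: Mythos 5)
Your proof is correct, but it takes a genuinely different route from the paper's. The paper argues by contradiction and by tracing what happens to a single blocked reduction: it supposes some leading monomial $\LM(f)$ appears under ordinary Gaussian elimination but not under Algorithm~\ref{alg:gaussian_elimination}, locates a row $r$ whose would-be reductor sits in a lower row $r'$, and then observes that the algorithm instead adds $r$ into $r'$, so the same cancellation occurs and the same leading monomial emerges, merely in a different row. You instead prove the equality of leading-monomial sets structurally, by reducing it to the fact that the set of pivot columns is an invariant of the row space together with the column order (the rank-jump characterization), and then verifying by induction on $c$ that the restricted elimination still seizes a pivot in column $c$ whenever the rank jumps there, thanks to the \texttt{continue} guard that only admits a row already cleared in columns $0,\dots,c-1$. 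Your route is more systematic and arguably more rigorous: the paper's argument handles a single blocked reduction and implicitly appeals to an induction it never spells out, whereas your invariant (``after processing columns $0,\dots,c-1$ every non-pivot row is zero in those columns'') is stated explicitly and is exactly what is needed to close the argument. The one thing you should make crisp in a final write-up is the induction base and step for that invariant — you gesture at it but do not carry it out — and you may also note, as you do, that the coefficients and tails of $\tilde F$ and $\tilde F'$ will differ (the F5 version does not clear above pivots), which is precisely why the lemma is stated only for leading-monomial sets.
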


\begin{proof}
Assume for contradiction that there is an element $f \in \tilde{F}$ with $\LM(f) \not\in \LM(\tilde{F'})$. This implies that there is a row $r$ in the coefficient matrix of $F$ corresponding to a polynomial $g$ which would reduce to $f$ in Gaussian elimination. Assume that this reduction is not allowed in Algorithm~\ref{alg:gaussian_elimination} because the necessary reductor is in a row
$r'$ below $r$. In that case Algorithm~\ref{alg:gaussian_elimination} will add the row $r$ to the row $r'$ (since $r$ has smaller signature than $r'$) and store the result in $r'$ producing the same addition and cancellation of leading terms. Thus only the row index of the result changes but the same additions are performed except for the clearance of the upper triangular matrix which does not affect leading terms.
\end{proof}

This allows us to prove that $F_{4/5}$ indeed computes a Gröbner basis if it terminates.

\begin{theorem}
\label{theorem:f45-correct}
If $F_{4/5}$ terminates and returns $g_0,\dots,g_{r-1}$ for the input $\{f_0,\dots,f_{m-1}\}$ then $g_0,\dots,g_{r-1}$ is a Gröbner basis for the ideal spanned by $f_0,\dots,f_{m-1}$ where $g_0,\dots,g_{r-1}$ and $f_0,\dots,f_{m-1}$ are homogeneous polynomials in $\F[x_0,\dots,x_{n-1}]$.
\end{theorem}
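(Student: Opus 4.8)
The plan is to derive the theorem from Lemmas~\ref{lem:f4_correct}, \ref{lem:criteria_correct}, and~\ref{lem:gauss_correct}. By Lemma~\ref{lem:criteria_correct} it suffices to show that, when the main loop of Algorithm~\ref{alg:f5} halts with output $G$ and $\mathcal G=\{\poly(g)\mid g\in G\}$, every $S$-polynomial of $\mathcal G$ satisfies condition (A) or (B) of that lemma. So I would fix an arbitrary pair $g,g'\in G$ and trace what Algorithm~\ref{alg:f5} did with the critical pair of $\poly(g)$ and $\poly(g')$. Two structural remarks make this possible: (i) every unordered pair of elements that ever enters $G$ is submitted to \textsc{Update}$_{F5}$, since in both loops of Algorithm~\ref{alg:f5} the call \textsc{Update}$_{F5}(i,j,G)$ is made for all $j\in G$ before $i$ is added to $G$; and (ii) the main loop exits only with $P=\varnothing$, so any pair that \textsc{Update}$_{F5}$ returned was later removed from $P$ and passed through \textsc{S-Polynomials}$_{F5}$ and \textsc{Reduction}$_{F5}$ at its degree.

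Write $S$ for the $S$-polynomial of $\poly(g)$ and $\poly(g')$, with naturally inferred signature $u\cdot\sig(k)$ (where $k\in\{g,g'\}$) coming from the primary component $u\cdot\poly(k)$ and with secondary component $v\cdot\poly(l)$. I would split into three cases according to what happened when $\{g,g'\}$ reached \textsc{Update}$_{F5}$.
\begin{itemize}
\item \emph{One of the \textsc{Top-reducible} tests fired.} Then the monomial part of the tested component's signature is divisible by $\LM(\poly(h))$ for some $h\in G$ of strictly smaller index; by the $F_5$ Criterion and the syzygy construction preceding it in Section~\ref{sec:background}, that signature is not the minimal signature of the component --- condition (B1).
\item \emph{A \textsc{Rewritable} test fired, either inside \textsc{Update}$_{F5}$ or in the repeated check inside \textsc{S-Polynomials}$_{F5}$.} Then the relevant component's signature is rewritable by the definition of \textsc{Rewritable} --- condition (B2).
\item \emph{The pair was returned and \textsc{S-Polynomials}$_{F5}$ placed its primary component $(u,k)$ into the batch handed to \textsc{Reduction}$_{F5}$.} Here I would invoke the argument following Algorithm~\ref{alg:find_reductor} --- the one concluding ``this cannot happen'' --- to conclude that the deferred secondary component $v\cdot\poly(l)$ is present in the matrix built by \textsc{Gaussian Elimination}$_{F5}$, so that the row for $(u,k)$ has its leading term cancelled against $v\cdot\poly(l)$ exactly as in the ordinary $S$-polynomial of $\poly(k)$ and $\poly(l)$. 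Since every row of that matrix is a monomial multiple of some element of $\mathcal G$, the row operations of \textsc{Gaussian Elimination}$_{F5}$ realise a chain of reductions $S\to\cdots\to\tilde p$ by multiples of elements of $\mathcal G$, and by Lemma~\ref{lem:gauss_correct} the straitjacketed elimination performs the same cancellations of leading terms as ordinary Gaussian elimination, so these are genuine reductions. The polynomial $\tilde p$ is appended to $L$ and, with its new rule, its index is added to $G$. If $\tilde p=0$, then $S$ reduces to $0$ modulo $\mathcal G$ along that chain; if $\tilde p\neq0$, then $\tilde p\in\mathcal G$, so $S$ reduces to $\tilde p$ by multiples of elements of $\mathcal G$ and then $\tilde p$ reduces to $0$ by itself. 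In either sub-case condition (A) holds.
\end{itemize}

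Once the three cases are exhausted, every $S$-polynomial of $\mathcal G$ satisfies (A) or (B), so Lemma~\ref{lem:criteria_correct} gives that $\mathcal G$ is a Gröbner basis of $\ideal{f_0,\dots,f_{m-1}}$; since the set returned by Algorithm~\ref{alg:f5} is exactly $\mathcal G=\{g_0,\dots,g_{r-1}\}$, the theorem follows. Homogeneity is used only to make the ``minimal degree in $P$'' bookkeeping well posed --- each $S$-polynomial is homogeneous and confined to one degree --- and it is already a hypothesis of Lemma~\ref{lem:criteria_correct}. I expect the third bullet to be the main obstacle: the first two are essentially a matter of unwinding the definitions of \textsc{Top-reducible} and \textsc{Rewritable}, whereas the third must combine the ``all necessary $S$-polynomials are computed'' argument after Algorithm~\ref{alg:find_reductor} with Lemma~\ref{lem:gauss_correct} in order to be sure that the $F_5$-restricted linear algebra of \textsc{Reduction}$_{F5}$ makes $S$ reduce to zero relative to the \emph{final} $\mathcal G$, and not merely to some intermediate configuration.
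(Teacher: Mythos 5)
Your proof is logically sound but takes a genuinely different route from the paper, and the difference is worth noting.

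The paper's proof is essentially an argument by \emph{inheritance from $F_4$}: it invokes Lemma~\ref{lem:f4_correct} to assert that the general loop structure is correct, then shows that each modification $F_{4/5}$ makes to $F_4$ is harmless --- the restricted Gaussian elimination produces the same leading monomials (Lemma~\ref{lem:gauss_correct}), \textsc{Reduction}$_{F5}$ returns what $F_4$-style algorithms require, and the pairs and reductors discarded by the $F_5$ criteria are unnecessary (Lemma~\ref{lem:criteria_correct}). You instead go straight at the hypothesis of Lemma~\ref{lem:criteria_correct}: you show by case analysis that every $S$-polynomial of the final $\mathcal G$ satisfies condition (A) or (B), which is precisely what that lemma requires. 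Lemma~\ref{lem:f4_correct} plays almost no role in your version --- your two ``structural remarks'' (every pair enters \textsc{Update}$_{F5}$ and $P$ is empty on exit) do the work that the paper delegates to the $F_4$ correctness proof. Your decomposition is arguably cleaner logically, since Lemma~\ref{lem:f4_correct} is literally about $F_4$ rather than $F_{4/5}$, and the paper is silently asking the reader to accept that the transfer is immediate, whereas you avoid that step by applying Lemma~\ref{lem:criteria_correct} directly. The trade-off is that your third bullet carries more weight: you need both Lemma~\ref{lem:gauss_correct} and the ``this cannot happen'' argument following \textsc{Find\ Reductor} in order to justify that surviving pairs really do produce a chain of reductions to zero modulo the \emph{final} $\mathcal G$; the paper's version buries that same work inside ``inspection of Algorithm~\ref{alg:reduction5}''. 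Both proofs are informal at roughly the same points, so neither is decisively more rigorous, but you have correctly identified where the real content lives, namely in verifying Lemma~\ref{lem:criteria_correct}'s hypothesis.

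One small caution on your third case: you argue that the row for $(u,k)$ has its leading term cancelled and the resulting $\tilde p$ is added to $G$ whenever nonzero. Be careful that the component retained by \textsc{S-Polynomials}$_{F5}$ is the one with the \emph{larger} signature, and with $F$ sorted in decreasing signature this row sits \emph{above} the deferred component $(v,l)$ in the matrix; you should check that the row operation that realises the cancellation of $u\cdot\LM(\poly(k))$ assigns the result the naturally inferred signature $u\cdot\sig(k)$, and that the ``continue'' guard in \textsc{Reduction}$_{F5}$ (which skips rows whose leading monomial is unchanged) does not silently drop the row holding $\tilde p$. This is where your appeal to Lemma~\ref{lem:gauss_correct} is doing real work, and it deserves a sentence acknowledging it.
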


\begin{proof}
Lemma~\ref{lem:f4_correct} states that the general structure of the algorithm is correct; Lemma~\ref{lem:gauss_correct} states that the output of \textsc{Gaussian Elimination}$_{F5}$ is not worse than the output of Gaussian elimination in $F_4$ from a correctness perspective since all new leading monomials are included. Inspection of Algorithm~\ref{alg:reduction5} shows that it does return the set $\{f \in \tilde{F} \mid \LM(f) \not\in F\}$ as required for correctness of $F_4$-style algorithms. Lemma~\ref{lem:criteria_correct} states that the pairs discarded by \textsc{Update}$_{F5}$ are not needed to compute a Gröbner basis. The correctness of the discarding of reductors in Algorithm~\ref{alg:find_reductor} also follows from Lemma~\ref{lem:criteria_correct}. Thus, we conclude that $F_{4/5}$ computes a Gröbner basis if it terminates.
\end{proof}

However, Theorem~\ref{theorem:f45-correct} does not imply that $F_{4/5}$ terminates for all inputs. We note however, that there are no known counter examples. The difficulty with proving termination is due to the fact that the set $F$ might not contain all possible reductors since the routine \textsc{Find Reductor} might discard a reductor if it is rewritable. While Lemma~\ref{lem:criteria_correct} shows that this discarding does not affect the correctness, it does not show that the algorithm terminates because elements might be added to $G$ and $P$ which have leading terms already in $\LM(\{\poly(g) \mid g \in G\})$.

\section{Relationship to Ars' dissertation}

We briefly describe the differences between the algorithm outlined here and that in~\cite{ars:thesis2005}. We refer to the latter as $F_{5/Ars}$.

\begin{itemize}
\item $F_{5/Ars}$ takes as input not only $F$, but also a function $\mathcal{S}$el to select critical pairs (cf.~\cite{F4}), whereas $F_{4/5}$ always selects pairs according to lowest degree of the \LCM. In this case, $F_{5/Ars}$ is more general, but note that the description of $F_4$ in~\cite{F4} claims that the most efficient method to select critical pairs is, in general, by lowest degree of the LCM.
\item $F_{5/Ars}$ uses two functions to update two lists of critical pairs:
  \begin{itemize}
  \item \textsc{Update1} is used to estimate the degree of termination (more correctly translated the \emph{degree of regularity --- degr{\'e} de regularit{\'e}}) and relies on Buchberger's LCM criterion. The critical pairs computed here are stored in a set $P$, but are never used to compute any polynomials, only to estimate the degree of termination.
  \item \textsc{Update2} is used to compute critical pairs that \emph{are} used to generate polynomials, and is comparable to \textsc{Update}$_{F5}$ here. In addition to the indices of two labelled polynomials and the set of indices of computed polynomials, \textsc{Update2} requires the list of previously computed critical pairs, and the estimated degree of termination. It discards critical pairs whose signatures are top-reducible by polynomials of lower index (the F5 criterion), as well as those whose degrees are larger than the estimated degree of termination.

\item Naturally, one wonders whether the estimated degree of termination is correct. The degree is estimated in the following way: any critical pair that passes Buchberger's second criterion is added to $P$, and the degree of termination is estimated as the largest degree of a critical pair in $P$.
    
    The reason such a method might be necessary in general is that no proof of termination exists for the F5 algorithms, not even in special cases~\cite{Gash2008}. The difficulty lies in the fact that F5 short-circuits many top-reductions in order to respect the criteria and the signatures (see \textsc{Symbolic\ Preprocessing} and \textsc{Find\ Reductor}). For various reasons, the redundant polynomials that result from this cannot be merely discarded --- some of their critical pairs are \emph{not} redundant --- but applying Buchberger's second criterion should allow one to determine the point at which all critical pairs are redundant.
    
    Note that a similar method to determine a degree of termination is given in~\cite{ederperry:f5+}, and is proven in detail. Each method has advantages over the other (one is slightly faster; the other computes a lower degree), and $F_{4/5}$ can be modified easily to work with either.
  \end{itemize}
\item $F_{5/Ars}$ adds both components of S-polynomials to the list of polynomials scheduled for reduction by Gaussian elimination. $F_{4/5}$ only adds the component with the bigger signature and relies on \textsc{Symbolic Preprocessing}$_{F5}$ to find a reductor for the leading term. This potentially allows for a reductor which had more reductions applied to it already.
\end{itemize}

Thus, the algorithms are essentially equivalent.

\section{A Small Example Run of \texorpdfstring{$F_{4/5}$}{F45}}

We consider the ideal $\ideal{x^{2} y - z^{2} t, x z^{2} - y^{2} t, y z^{3} - x^{2} t^{2}} \in \F_{32003}[x, y, z, t]$ with the degree reverse lexicographical monomial ordering.

After the initialisation $G$ contains three elements $$(\e_0, xz^2 - y^2t), (\e_1, x^2y - z^2t),(\e_2, yz^3 - x^2t^2)$$ and $P$ contains the three pairs $$(x^2yz^2, z^2, 1, xy, 0), (xyz^3, x, 2, yz, 0), (x^2yz^3, x^2, 2, z^3, 1).$$

At degree $d=5$ the algorithm selects the pairs $(x^2yz^2, z^2, 1, xy, 0)$ and $(xyz^3, x, 2, yz, 0)$ of which both survive the $F_5$ criteria. These generate two new labelled polynomials $L_3 = (x\e_2, xyz^3 - x^3t^2)$ and $L_4 =(z^2\e_1, x^2yz^2 - z^4t 4)$. These reduce to $y^3zt - x^3t^2$ and $xy^3t - z^4t$ respectively and are returned by \textsc{Reduction}$_{F5}$.

At degree $d=6$ the algorithm selects the pairs $(x^2yz^3, x^2, 2, z^3, 1)$ and $(xy^3zt, x, 3, z, 4)$ of which only the pair $(x^2yz^3, x^2, 2, z^3, 1)$ survives the $F_5$ criteria. This pair generates a new labelled polynomial $L_5 = (x^2\e_2,xy^3zt - x^4t^2)$ which reduces to $z^5t - x^4t^2$ and is returned by  \textsc{Reduction}$_{F5}$.

At degree $d=7$ the algorithm selects the critical pairs $$(xy^3z^2t, z^2, 4, y^3t, 0), (xy^3z^2t, xz, 3, y^3t, 0), (x^2y^3zt, x^2, 3, y^2zt, 1), (xz^5t, x, 5, z^3t, 0)$$ of which $(xy^3z^2t, z^2, 4, y^3t, 0)$ and $(xz^5t, x, 5, z^3t, 0)$ survive the $F_5$ criteria. These pairs generate two new labelled polynomials $L_6 = (x^3\e_2, xz^5t - x^5t^2)$ and $L_7 = (z^4\e_1, xy^3z^2t - z^6t)$. \textsc{Reduction}$_{F5}$ these reduce to $x^5t^2 - z^2t^5$ and $z^6t - y^5t^2$. However, \textsc{Reduction}$_{F5}$ also returns a third polynomial in order to preserve signatures, that is $L_8 = (x^2z\e_2, y^5t^2 - x^4zt^2)$.

At degree $d=8$ the algorithm selects the pair $(yz^6t, z^3t, 2, y, 7)$ which survives the $F_5$ criteria. This pair generates a new labelled polynomial $L_9 = 
(z^3t\e_2, yz^6t - x^2z^3t^3)$ which reduces to $y^6t^2 - xy^2zt^4$.

Then the algorithm terminates.


\bibliographystyle{alpha}
\bibliography{literature}

\end{document}